\documentclass[a4paper,10pt,reqno]{amsart}

\usepackage{amsmath, amstext, paralist, amsthm, amssymb, epsfig, paralist, tikz, tikz-cd, graphicx}
\usepackage{hyperref}
\usepackage{xurl}

\newtheorem{thm}{Theorem}[section]
\newtheorem{lem}[thm]{Lemma}
\newtheorem{prop}[thm]{Proposition}
\newtheorem{ex}[thm]{Example}

\renewcommand{\bar}{\overline}

\newcommand{\Ker}{\operatorname{Ker}\nolimits}
\newcommand{\Coker}{\operatorname{Cok}\nolimits}
\newcommand{\Cok}{\operatorname{Cok}\nolimits}
\newcommand{\cok}{\operatorname{cok}\nolimits}
\newcommand{\coker}{\operatorname{cok}\nolimits}

\renewcommand{\Im}{\operatorname{Im}\nolimits}
\newcommand{\im}{\operatorname{im}\nolimits}
\newcommand{\coim}{\operatorname{coim}\nolimits}
\newcommand{\Coim}{\operatorname{Coim}\nolimits}

\begin{document}
	\title{Some Remarks about Integral Categories}
	\author{Yaroslav Kopylov}
	\address{Yaroslav Kopylov\,
		%\orcidlink{0000-0002-0343-4424}\,
		%0000-0002-0343-4424
		\newline\hphantom{iii} Sobolev Institute of Mathematics
		\newline\hphantom{iii} 4 Koptyug Ave.
		\newline\hphantom{iii} 630090, Novosibirsk, Russia}
	\email{\href{mailto:yakop@math.nsc.ru}{yakop@math.nsc.ru}}
	
	\author{Max Zinchenko}
	\address{Max Zinchenko\,
		%\orcidlink{???}\,
		%???
		\newline\hphantom{iii} Novosibirsk State University
		\newline\hphantom{iii} 1 Pirogova St.
		\newline\hphantom{iii} 630090, Novosibirsk, Russia}
	\email{\href{mailto:m.zinchenko@g.nsu.ru}{m.zinchenko@g.nsu.ru}}
	
	\renewcommand{\thefootnote}{}
	\footnote{2010 \emph{Mathematics Subject Classification}: Primary 18A20; Secondary 46M18.}
	\footnote{\emph{Key words}: preabelian category, integral category, semi-abelian category, quasi-abelian category.}
	\vspace{-35pt}
	
	{\small
		\begin{abstract}\noindent{}
			We consider integral categories, a class of categories that  gained importance in the last three decades in the categorical foundations of algebra and functional analysis. We discuss some familiar criteria for integrality and prove new criteria for one- and two-sided integrality. For the first time in the literature, an example is given of a right but not left integral category.
	\end{abstract}}
	
	\maketitle
	
	\section{Introduction}\label{Introduction}\vspace{-5pt}
	
	Integral categories were first considered by B\u anic\u a and Popescu in 1965 in \cite{BanicaPopescu1965} under the~name {\em pr\'eab\'eliennes sp\'eciales}. These are preabelian categories (i.e., additive categories with kernels and cokernels) in which epimorphisms are stable under pullbacks and monomorphisms are stable under pushouts. In 2001, Rump called such categories integral and introduced the notions of a left integral category (a~preabelian one in which epimorphisms are stable under pullbacks) and a~right integral category (a~preabelian one in which monomorphisms are stable under pushouts).  
	
	Integral categories have recently been considered in representation theory (see, for example, \cite{BrHasTat2021, BuanMarsh2012, Nakaoka2013} and in functional analysis (see
	\cite{HSW2021, LawsonWegner2023}).
	
	Numerous examples and non-examples of integral categories can be found in \cite{HSW2021}. However, no example seems to have been known of a right integral category that is not left integral (or vice versa).  
	
	The structure of the paper is as follows. In Section~\ref{Prep}, 
	we give the necessary definitions and discuss the interrelationship between the types of preabelian categories we consider. In Section \ref{CrInt}, we discuss some familiar criteria for integrality and prove some extra criteria for one- and two-sided integrality of a~preabelian category. Then, in Section~\ref{ex-couple}, we adapt the results of Kopylov and Wegner \cite{KW2} on the derivation of exact couples in a semi-abelian category in the sense of Raikov (using the approach of Eckmann, Hilton \cite{EH} for constructing exact couples, see~\cite{M}), to integral categories. In the~last section, Section~\ref{Example}, we give an~explicit example of a~right integral category that is not left integral and discuss its consequences.
	
	\section{Notations and Preliminaries}\label{Prep}\vspace{-5pt}
	
	In the sequel, let $\mathcal{A}$ be a preabelian category, i.e.~an additive category with kernels and cokernels. For a morphism $f\colon E\rightarrow F$ in $\mathcal{A}$ we denote by $\ker f\colon \Ker f\rightarrow E$ its kernel and by $\cok f\colon F \rightarrow \Cok f$ its cokernel. Note that kernels and cokernels are unique only up to isomorphisms; we will however speak of \textit{the} kernel and \textit{the} cokernel of a given morphism $f$. We say that $f$ is a \textit{kernel} if there is a morphism $g$ such that $f=\ker g$. \textit{Cokernels} are defined dually. We denote by $\coim f\colon E\rightarrow \Coim f$ the cokernel of $\ker f$ and by $\im f\colon\Im f\rightarrow F$ the kernel of $\cok f$. As above, \textit{image} and \textit{coimage} are unique only up to isomorphims but we will also use definite articles in the sequel; the~same refers to pullbacks and pushouts. Any morphism $f$ in a~preabelian category admits a canonical decomposition $f=(\im f)\bar{f}\coim f$. Following B\u{a}nic\u{a}, Popescu \cite{BanicaPopescu1965} or Schneiders \cite[Definition 1.1.1]{Schneiders1999}, we say that $f$ is \textit{strict} if $\bar{f}$ is an isomorphism. By definition, $\mathcal{A}$ is abelian if and only if every morphism is strict.
	We say that a kernel $\alpha$ is semi-stable if all its pushouts along arbitrary morphisms are again kernels. Semi-stable cokernels are defined dually.
	
	\smallskip
	
	According to Rump \cite[p.~167]{Rump2001}, we say that $\mathcal{A}$ is \textit{left semi-abelian} if $\bar{f}$ is a monomorphism for each morphism $f$. Dually, we say that $\mathcal{A}$ is \textit{right semi-abelian} if $\bar{f}$ is an epimorphism for each morphism $f$. If $\mathcal{A}$ is left semi-abelian then each morphism $f$ admits a decomposition $f=ip$ with a cokernel $p$ and a monomorphism $i$. If dually $\mathcal{A}$ is right semi-abelian then each morphism $f$ admits a decomposition $f=ip$ with an epimorphism $p$ and a kernel $i$. In fact, the last assertions are even equivalent to the definitions of left and right semi-abelianity (see \cite[p.~167]{Rump2001}). A~left and right semi-abelian category
	is called a~semi-abelian category (in the~sense of Palamodov):~$\mathcal{A}$ is semi-abelian iff $\bar{f}$ is a \textit{bimorphism} (=\textit{regular morphism}), i.e.~$\bar{f}$ is a monomorphism and an epimorphism simultaneously, for each morphism $f$. 
	
	We need some results on preabelian categories. The first lemma summarizes well-known facts, see, e.g., \cite[Remark 1.1.2]{Schneiders1999}, \cite[Theorems 1 and 5]{RichmanWalker}, and \cite[Proposition 5.2]{Kelly1969}.
	
	\begin{lem}\label{l1} Let $\mathcal{A}$ be a preabelian category.
		\begin{itemize}
			\item[(i)] A morphism $f$ is a kernel if and only if $f=\im f$ and it is a cokernel if and only if $f=\coim f$.
			\item[(ii)] A morphism $f$ is strict if and only if there is a representation $f=f_1 f_0$ with a cokernel $f_0$ and a kernel $f_1$. In every such representation we have
			$f_0=\coim f$ and $f_1=\im f$.
			\item[(iii)]In every pullback
			$$
			\begin{tikzcd}
				\Ker p_G \arrow[r] & P \arrow[r, "p_G"] \arrow[d, "p_E"'] \arrow[r, "\text{PB}", phantom, bend right=60] & G \arrow[d, "t"]  &   \\
				\Ker f \arrow[r]   & E \arrow[r, "f"']                                                                   & F \arrow[r, "h"'] & H
			\end{tikzcd}
			$$
			we have $\ker f= p_E \ker p_G$. If $f$ is the kernel of a morphism $h$ then $p_G$ is the kernel of $ht$. If $f$ is a monomorphism then so is $p_G$.
			
			\smallskip
			
			Dually, in every pushout
			$$
			\begin{tikzcd}
				& E \arrow[r, "g"] \arrow[d, "s"'] & F \arrow[d, "s_F"] \\
				& G \arrow[r, "s_G"'] & S
				\arrow[from=1-2, to=2-3, phantom, "\text{\small PO}" description]
			\end{tikzcd}
			$$
			we have $\cok g=(\cok s_G) s_F$. If $g$ is the cokernel of a morphism $h$ then $s_G$ is the cokernel of $sh$. If $g$ is an epimorphism then so is $s_G$.\hfill\qed
		\end{itemize}
	\end{lem}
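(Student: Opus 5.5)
All three parts follow from the universal properties of kernels, cokernels and pullbacks/pushouts together with additivity. Because the statement is self-dual, I will prove (i), (ii) and the pullback half of (iii) directly. The pushout half of (iii) then follows by passing to $\mathcal{A}^{\mathrm{op}}$, which is again preabelian.

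For (i), suppose $f=\ker g$. Then $gf=0$, so $g$ factors through $\cok f$, and hence any $x$ with $(\cok f)x=0$ satisfies $gx=0$. Conversely, $(\cok f)x=0$ holds whenever $gx=0$. So $\ker g$ and $\ker(\cok f)$ have the same universal property, and $f=\im f$ up to the canonical isomorphism. The converse direction is immediate from the definition of $\im f$ as a kernel. The cokernel statement is dual.

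For (ii), if $f$ is strict, then $f=(\im f)\,(\bar f\coim f)$ is such a representation, because a cokernel composed with an isomorphism is still a cokernel. Conversely, let $f=f_1f_0$ with $f_0$ a cokernel and $f_1$ a kernel. Since $f_1$ is monic, $\ker f=\ker f_0$. Part (i) then gives $\coim f=\cok(\ker f_0)=f_0$. Dually, since $f_0$ is epic, $\cok f=\cok f_1$, so $\im f=f_1$. Uniqueness of the canonical factorization then forces $\bar f$ to be an identity up to these isomorphisms, so $f$ is strict. This also proves the final claim that $f_0=\coim f$ and $f_1=\im f$.

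For (iii), consider the pullback $P$ of $f$ and $t$. First, $f\,p_E\ker p_G=t\,p_G\ker p_G=0$, so $p_E\ker p_G$ factors through $\ker f$. In the other direction, the pair $(\ker f,0)$ induces a map $\Ker f\to P$ that is killed by $p_G$, so it factors through $\ker p_G$. Both composites are identities because the maps involved are monic. Hence $p_E\ker p_G$ is a kernel of $f$.

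Now assume $f=\ker h$. Then $ht\,p_G=hf\,p_E=0$. Given $x$ with $htx=0$, the morphism $tx$ factors as $fy$ for a unique $y$, because $f=\ker h$. The pair $(y,x)$ then yields a unique $z$ with $p_Gz=x$. Uniqueness of $z$ follows since $p_G$ is monic, which is the last claim. To see that $p_G$ is monic, take $p_Gz=0$; then $fp_Ez=0$, so $p_Ez=0$ because $f$ is monic, and therefore $z=0$ by the universal property of the pullback.

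The only delicate step is the converse in (ii): one has to check that the canonical comparison maps really compose to make $\bar f$ invertible. I will handle this by writing $\coim f=f_0$ and $\im f=f_1$ explicitly as the chosen representatives. Then $f=f_1f_0=f_1\bar f f_0$. Cancelling the epimorphism $f_0$ and the monomorphism $f_1$ gives $\bar f=1$.
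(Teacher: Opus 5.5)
Your proof is correct. It differs from the paper only in that the paper gives no proof at all: it records these facts as well known, cites Schneiders (Remark~1.1.2), Richman--Walker (Theorems~1 and~5) and Kelly (Proposition~5.2), and closes with $\qed$.

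Your argument replaces those citations with a direct verification from the universal properties of kernels, cokernels and pullbacks in an additive category, and obtains the pushout half by passing to $\mathcal{A}^{\mathrm{op}}$. What this buys is explicitness: it shows that only the preabelian axioms are used, with no semi-abelian or integrality hypothesis. The paper's citation instead keeps the preliminaries short.

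One step in (iii) deserves a sentence more than you give it. You must show that the composite $\Ker p_G\to\Ker f\to\Ker p_G$ is the identity, or equivalently that factorizations through $p_E\ker p_G$ are unique. Monicity of $\ker f$ and $\ker p_G$ alone does not give this. The map $\Ker p_G\to\Ker f\to P$, with the second arrow induced by the pair $(\ker f,0)$, agrees with $\ker p_G$ after composing with $p_E$ and after composing with $p_G$. You then need the joint monicity of $(p_E,p_G)$, i.e.\ the uniqueness clause of the pullback, to conclude that the two maps into $P$ are equal. Only after that does cancelling the monomorphism $\ker p_G$ give the identity.

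You use exactly this joint-monicity argument correctly a few lines later, when proving that $p_G$ is monic. So this is a matter of exposition rather than a gap.
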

	
	We stress that the last statement of Lemma \ref{l1} implies that in any preabelian category, kernels pullback to kernels and cokernels pushout to cokernels.
	
	Following Rump \cite{Rump2001}, we say that a~preabelian category
	$\mathcal{A}$ is called {\em left integral} if pullbacks of epimorphisms in~$\mathcal{A}$ are epimorphisms. Dually,
	$\mathcal{A}$ is {\em right integral} if pushouts of monomorphisms are monomorphisms. 
	
	It is clear that a~left (resp., right) integral category is left (resp., right) semi-abelian. 
	
	A~left and right integral category is called {\em integral}. 
	
	A {\it left quasi-abelian category} is a preabelian category in which pullbacks of cokernels are cokernels; a {\em right quasi-abelian category} is a preabelian category in which pushouts of kernels are kernels. A left and right quasi-abelian category is called {\em quasi-abelian}.

	\section{Criteria for Integrality}\label{CrInt}
	
	In this section, we discuss some familiar criteria for integrality and prove some extra criteria for one- and two-sided integrality. 
	
	\smallskip
	
	Given a commutative square
	\begin{equation}\label{sq1}
		\begin{tikzcd}
			\Ker g \arrow[r] \arrow[d, "\hat{\alpha}"'] & C \arrow[r, "g"] \arrow[d, "\alpha"'] & D \arrow[d, "\beta"] \arrow[r] & \Cok g \arrow[d, "\hat{\beta}"] \\
			\Ker f \arrow[r]                            & A \arrow[r, "f"']                     & B \arrow[r]                    & \Cok f                         
		\end{tikzcd}
	\end{equation}
	in a preabelian category $\mathcal{A}$, we denote by $\hat\alpha\colon\Ker g\rightarrow\Ker f$ the unique morphism satisfying $\alpha\ker g=(\ker f)\hat\alpha$ and dually by $\hat\beta\colon\Coker g\rightarrow\Coker f$ the unique morphism satisfying $(\coker f)\beta=\hat\beta\coker g$. The above square will be used throughout the remainder of the section without repetition. 
	
	The following proposition gives us a~criterion for a~preabelian category to be integral, which will be used below:
	
	\begin{prop}\label{t1.1} 
		The following are equivalent for preabelian category $\mathcal{A}$:
		\begin{itemize}
			\item[(i)] $\mathcal{A}$ is integral.
			\item[(ii)] If ~\eqref{sq1} is a pushout then the canonical morphism $\hat\alpha: \Ker g \to \Ker f$ is an epimorphism, and if~\eqref{sq1}
			is a~pullback then the~canonical morphism $\hat\beta: \Coker g \to \Coker f$ is a~monomorphism.
		\end{itemize}
	\end{prop}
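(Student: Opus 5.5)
The implication (ii)$\Rightarrow$(i) is immediate, so I would dispose of it first. Let $g\colon C\to D$ be a monomorphism and let $\alpha\colon C\to A$ be arbitrary, and form the pushout, so that \eqref{sq1} is a pushout with $f\colon A\to B$ the pushed-out morphism. Then $\Ker g=0$. By (ii), $\hat\alpha\colon 0\to\Ker f$ is an epimorphism, hence $\Ker f=0$, and $f$ is a monomorphism. So $\mathcal{A}$ is right integral. Dually, if \eqref{sq1} is a pullback of an epimorphism $f$, then $\Coker f=0$. Since $\hat\beta\colon\Coker g\to 0$ is a monomorphism, $\Coker g=0$, and $g$ is epic. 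So $\mathcal{A}$ is left integral.

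For (i)$\Rightarrow$(ii), by duality it suffices to treat the pushout case. The plan is to express $\Ker f$ and $\Ker g$ through the morphism
\[
u=\begin{pmatrix}\alpha\\ -g\end{pmatrix}\colon C\to A\oplus D .
\]

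Step 1. Since \eqref{sq1} is a pushout, $B=\Cok u$, and $f=(\cok u)\iota_A$, where $\iota_A$ is the coproduct injection. Hence $\Ker f$ is the pullback of $\ker(\cok u)=\im u$ along $\iota_A$. Concretely, there is a morphism $k\colon\Ker f\to\Im u$ with $(\im u)k=\iota_A\ker f$, and $\Ker f$ is the pullback of $\im u$ along $\iota_A$.

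Step 2. Write $u=(\im u)e$ with $e=\bar u\coim u\colon C\to\Im u$. An integral category is right semi-abelian, so $\bar u$ is an epimorphism and $e$ is an epimorphism.

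Step 3. Form the pullback $P$ of $e$ along $k$. By left integrality, the projection $P\to\Ker f$ is an epimorphism.

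Step 4. Identify $P$ with $\Ker g$, and the projection $P\to\Ker f$ with $\hat\alpha$. I expect this to be the main bookkeeping obstacle. The argument runs as follows.
\begin{itemize}
\item Since $\im u$ is a monomorphism, pasting pullbacks shows that $P$ is also the pullback of $\iota_A\ker f$ along $u$.
\item Pasting again, $P$ is the pullback along $u$ of the pullback of $\im u$ along $\iota_A$. That in turn is the pullback of $\iota_A$ along $u$, since $\iota_A$ factors through $\im u$ via $\Ker f$ only on the relevant part. So I would rather argue directly with universal properties.
\item A morphism $x\colon X\to C$ factors through $P$ if and only if $ux$ factors through $\iota_A$. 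This holds if and only if $\pi_D ux=-gx=0$, that is, if and only if $x$ factors through $\ker g$; here Lemma~\ref{l1}(iii) applies, since $\iota_A=\ker\pi_D$.
\item The resulting induced morphism $\Ker g\to\Ker f$ satisfies $(\ker f)\hat\alpha=\pi_A u\ker g=\alpha\ker g$. Hence it is exactly $\hat\alpha$.
\end{itemize}
Therefore $\hat\alpha$ is an epimorphism.

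The pullback statement follows by the dual argument, using $(\beta,\ g)\colon A\oplus D\to B$, left semi-abelianity, and right integrality.
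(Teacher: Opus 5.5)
Your proof is correct, but for (i)$\Rightarrow$(ii) it takes a genuinely different route from the paper.

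\textbf{The paper's route.} It factors $g=g_1g_0$ with $g_0$ a cokernel and $g_1$ a monomorphism, using left semi-abelianness. Pushing this factorization out gives $f=f_1f_0$ with $f_1$ monic, by right integrality. It then tests $\hat\alpha$ against a morphism $x$ with $x\hat\alpha=0$: it pushes $\ker f$ out along $x$ and uses right integrality a second time.

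\textbf{Your route.} You write $B=\Cok u$ for the morphism $u\colon C\to A\oplus D$ with components $\alpha$ and $-g$. You split $u=(\im u)e$ with $e$ epic, using right semi-abelianness. You then show that $\hat\alpha$ is itself the pullback of the epimorphism $e$ along the monomorphism $k$, so one application of left integrality finishes.

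\textbf{Comparison.} The hypotheses enter in crossed fashion. The paper gets the pushout half of (ii) from right integrality plus left semi-abelianness; you get it from left integrality plus right semi-abelianness. Your argument is shorter and more structural, and it gives a bonus. Combined with your (ii)$\Rightarrow$(i) step, it shows directly that a left integral, right semi-abelian category is right integral, and dually. This reproves Rump's result, quoted in Section~\ref{Example}, that a semi-abelian category is left integral iff it is right integral. The paper's argument cannot give this, since it already assumes right integrality for the pushout half. In exchange, the paper's argument works only with pushouts and test morphisms and needs no biproduct encoding.

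\textbf{Minor repairs.}
\begin{itemize}
\item Step~4 is cleanest if you verify directly that $(\Ker g,\ker g,\hat\alpha)$ has the universal property of the pullback of $e$ and $k$. Commutativity follows after composing with the monomorphism $\im u$. For the universal property, $ex=ky$ gives $ux=\iota_A(\ker f)y$, hence $gx=0$. Then $\hat\alpha z=y$ because $\ker f$ is monic.
\item Drop the muddled second bullet of Step~4.
\item If you keep the ``factors through $P$'' formulation, note two things: $k$ is monic, so $P\to C$ is monic; and $ux=\iota_A z$ forces $fz=(\cok u)ux=0$.
\item In the dual argument, the relevant morphism is $(f,\,-\beta)\colon A\oplus D\to B$, not $(\beta,\,g)$.
\end{itemize}
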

	
	\begin{proof}
		(i)$\Rightarrow$(ii) 
		Since $\mathcal A$ is integral, it is also semi-abelian. Thus, we can decompose $g$ as $g=g_1 g_0$ with $g_1$ a monomorphism and $g_0$ a cokernel. Consider the pushout
		$$
		\begin{tikzcd}
			C \arrow[r, "g_0"] \arrow[d, "\alpha"'] \arrow[r, "\text{PO}", phantom, bend right=60] & A' \arrow[d, "\beta'"] \arrow[r, "g_1", hook] & D \arrow[d, "\beta"] \\
			A \arrow[r, "f_0"'] \arrow[rr, "f"', bend right=49]                                    & B' \arrow[r, "f_1"', dashed]                  & B                   
		\end{tikzcd}
		$$
		We have $\beta g=\beta g_1 g_0=f\alpha$. Thus, there is a unique morphism $f_1\colon B'\rightarrow B$ with $f=f_1 f_0$ and $\beta g_1=f_1 \beta'$. It is easy to see (see, for instance, \cite[Lemma 5.1]{Kelly1969}) that
		$$
		\begin{tikzcd}
			A' \arrow[r, "g_1"] \arrow[d, "\beta' "'] & D \arrow[d, "\beta"] \\
			B' \arrow[r, "f_1"'] & B
			\arrow[from=1-1, to=2-2, phantom, "\scriptstyle\text{PO}" description]
		\end{tikzcd}
		$$
		is again a pushout and so $f_1$ is a monomorphism because $\mathcal{A}$ integral.\\
		Let $x\colon\Ker f\rightarrow X$ be a morphism such that $x\hat\alpha=0$. Consider the pushout
		$$
		\begin{tikzcd}
			\Ker g \arrow[r, "\hat{\alpha}"] \arrow[d] & \Ker f \arrow[r, "x"] \arrow[d, "\ker f"'] \arrow[r, "\text{PO}", phantom, bend right=60] & X \arrow[d, "z_2"] \\
			C \arrow[r, "\alpha"']                     & A \arrow[r, "z_1"']                                                                       & Z                 
		\end{tikzcd}
		$$
		We have $z_1\alpha\ker g=z_1(\ker f)\hat\alpha = z_2 x \hat\alpha=0$ and $g_0=\coker\ker g$. Thus, there exists a unique morphism $w\colon A'\rightarrow Z$ such that $z_1\alpha=w g_0$. Since $\beta' g_0=f_0\alpha$ is a pushout, there is a unique morphism $\sigma\colon B'\rightarrow Z$ such that $w=\sigma\beta'$ and $z_1=\sigma f_0$. But then $z_2 x=z_1 \ker f=\sigma f_0 \ker f=0$ because $0=f\ker f=f_1 f_0 \ker f$ and $f_1$ is a monomorphism. Since $\ker f$ is a monomorphism and the category is right integral, $z_2$ is a monomorphism, whence $x=0$ and $\hat\alpha$ is an epimorphism. 
		
		By duality, we see that $\hat\beta$ is a~monomorphism.
		\smallskip
		
		(ii)$\Rightarrow$(i) If (1) is a pushout with $g$ a monomorphism then $(\ker f)\hat\alpha = \alpha\ker g=0$. Since $\hat\alpha$ is an epimorphism, this gives $\ker f=0$, i.e.~$f$ is a monomorphism. Hence, $A$ is right integral. By duality, we conclude that $\mathcal{A}$ is also left integral. 
	\end{proof}
	
	The~following Proposition is well known (see~\cite[Proposition~7]{BanicaPopescu1965}, \cite[Proposition~6]{Rump2001}, and \cite[Proposition~4.1]{BuanMarsh2012}).
	
	\begin{prop}\label{sa-int1}
		Let $\mathcal{A}$ be a semi-abelian category. The~following are equivalent:
		\item[(i)] $\mathcal{A}$ is integral.
		\item[(ii)] Bimorphisms in $\mathcal{A}$ are stable under pullbacks.
		\item[(iii)] Bimorphisms in $\mathcal{A}$ are stable under pushouts.
		\item[(iv)] The~system of all bimorphisms in $\mathcal{A}$ admits
		a~calculus of left fractions and a~calculus of right fractions.
	\end{prop}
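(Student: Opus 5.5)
The plan is to establish (i)$\Leftrightarrow$(ii) and (i)$\Leftrightarrow$(iii). The two are dual, so I will prove only (i)$\Leftrightarrow$(iii); then (iv) follows by checking the Ore-type conditions. Throughout, $\mathcal{A}$ is semi-abelian, so every $f$ factors as $f=(\im f)\bar f\coim f$ with $\bar f$ a bimorphism.

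For (i)$\Rightarrow$(iii), take a bimorphism $u\colon E\to F$ and a pushout of it along $s\colon E\to G$, giving $s_G\colon G\to S$. Right integrality makes $s_G$ a monomorphism. By Lemma~\ref{l1}(iii), pushouts of epimorphisms are epimorphisms in any preabelian category, so $s_G$ is also an epimorphism. Hence $s_G$ is a bimorphism, and this direction is easy.

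For (iii)$\Rightarrow$(i), right integrality comes first. Let $m\colon E\to F$ be a monomorphism. In a semi-abelian category $m=(\im m)\bar m\coim m$, and $\coim m=\cok\ker m=\cok 0$ is an isomorphism. So $m=k u$ with $k=\im m$ a kernel and $u$ a bimorphism. A pushout of $m$ along $s$ can be computed in two stages. First push $u$ out along $s$; by (iii) this gives a bimorphism $u'$. Then push $k$ out along the resulting map; this gives some morphism $k'$, and the composite $k'u'$ is the pushout of $m$ (this is the pasting property of pushouts, cf.\ \cite[Lemma 5.1]{Kelly1969}). So it suffices to show that a pushout of a kernel is a monomorphism.

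This step is the main obstacle. My first attempt: let $k=\ker h$ and push $k$ out along $t$ to get $k'\colon T\to S$, with $h$ factoring through $S$. Put $c=\cok k'$. By Lemma~\ref{l1}(iii), $c$ equals the cokernel of $k$ composed with the pushout leg. Now apply semi-abelianity to $k'$: write $k'=(\im k')\bar{k'}\coim k'$, so monicity of $k'$ reduces to $\ker k'=0$. I would then pass to the pushout of $\coim k'$ (a cokernel) and use (iii) on the bimorphism $\bar{k'}$ to transport the kernel of $k'$ back along $t$, where it must vanish because $k$ is monic. If this direct route stalls, I will fall back on the published arguments (\cite[Proposition 6]{Rump2001}, \cite[Proposition 4.1]{BuanMarsh2012}). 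The key known fact there is that in a semi-abelian category a morphism $f$ is monic iff $\coim f$ is an isomorphism. Combining this with the semi-stability of kernels that follows from (iii), the pushout of $k$ factors as a kernel after a bimorphism, hence is monic. Left integrality then follows by the dual argument, using the dual of (iii), i.e.\ (ii).

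For (iv), the two calculi require: identities and composites of bimorphisms are bimorphisms, which is clear; the Ore condition, which is exactly stability of bimorphisms under pushouts (for left fractions) and pullbacks (for right fractions); and cancellability. For cancellability, if $su=sv$ with $s$ a bimorphism, then $u=v$ by monicity, so it holds trivially; the dual holds likewise. Thus (ii)+(iii)$\Leftrightarrow$(iv), and together with (i)$\Leftrightarrow$(ii)$\Leftrightarrow$(iii) this closes the cycle.
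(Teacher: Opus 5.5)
The paper does not prove this proposition; it only cites \cite{BanicaPopescu1965,Rump2001,BuanMarsh2012}. Your plan therefore has to stand on its own, and as written it does not close the cycle.

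\textbf{Main gap: left integrality from (iii).} In (iii)$\Rightarrow$(i) you obtain only right integrality. The sentence ``left integrality then follows by the dual argument, using \ldots\ (ii)'' takes (ii) as a hypothesis, which you do not have. What your plan actually yields is (i)$\Leftrightarrow$((ii) and (iii))$\Leftrightarrow$(iv).

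For semi-abelian $\mathcal{A}$, (iii) is equivalent to right integrality and (ii) to left integrality; this is what your decomposition argument gives once the kernel step below is repaired. So the missing implication (iii)$\Rightarrow$(ii) is exactly Rump's theorem that left and right integrality coincide for semi-abelian categories (\cite[p.~12, Corollary]{Rump2001}, quoted in Section~\ref{Example}). That is the real content of the proposition, and your plan skips it.

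It can be proved directly.
Let $u\colon E\to F$ be a bimorphism and $t\colon G\to F$, with pullback legs $u'\colon P\to G$ and $t'\colon P\to E$. Push out $t'$ and $u'$ to get $a\colon E\to C$, $b\colon G\to C$, and the induced $\phi\colon C\to F$ with $\phi a=u$ and $\phi b=t$.
Since $\ker(u,-t)=\binom{t'}{u'}$, we have $(a,-b)=\coim(u,-t)$. Hence $\phi=(\im(u,-t))\overline{(u,-t)}$, where $\im(u,-t)$ is invertible because $(u,-t)$ is epic. So $\phi$ is a bimorphism.
Now push $\phi$ out along $q=\cok a$ to get $\phi'$ and $q'$ with $\phi'q=q'\phi$. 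By (iii), $\phi'$ is monic. Then $q'u=q'\phi a=\phi'qa=0$ forces $q'=0$, hence $\phi'q=0$ and $q=0$.
By Lemma~\ref{l1}(iii), $\Cok u'\cong\Cok a=0$, so $u'$ is epic. Dually, (ii)$\Rightarrow$(iii).

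\textbf{Second gap: the kernel step.} Your ``first attempt'' only describes an idea, and the fallback is false. Condition (iii) does not give semi-stability of kernels. Since (iii) holds in every integral category, this would make every integral category right (and dually left) quasi-abelian, but integral categories need not be quasi-abelian (see \cite{HSW2021}).

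What you need holds in every right semi-abelian category, without using (iii).
Let $k=\ker h\colon E\to F$ and $s\colon E\to G$, with pushout legs $s_F$, $s_G$ satisfying $s_Fk=s_Gs$, and let $\pi_F\colon F\oplus G\to F$ be the projection.
The morphism $\mu=\binom{k}{-s}$ is monic, so $\coim\mu$ is invertible and $\mu=(\im\mu)\bar\mu$.
Since $h\pi_F\mu=hk=0$, $h\pi_F$ factors through $\cok\mu$ and therefore kills $\im\mu$. Thus $\pi_F\im\mu=k\psi$ for some $\psi$, and $k=\pi_F\mu=k\psi\bar\mu$ gives $\psi\bar\mu=1$.
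By right semi-abelianity $\bar\mu$ is also epic, so $\mu$ is the kernel of $\cok\mu=(s_F,s_G)$.
Finally, $s_Gx=0$ gives $\binom{0}{x}=\mu y$, whence $ky=0$, so $y=0$ and $x=0$.
This is the content of \cite[Proposition~3.1]{KW}, used in the proof of Proposition~\ref{sa-int2}. With it, your decomposition $m=(\im m)\bar m$ does give (iii)$\Rightarrow$ right integrality.

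\textbf{Minor point on (iv).} The Ore condition only supplies some commutative square $gs=wf$ with $w$ a bimorphism, not a pushout. Add that the pushout leg $s'$ of $s$ along $f$ satisfies $\chi s'=w$ for the induced $\chi$, so $s'$ is monic. It is epic as a pushout of an epimorphism.
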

	
	In the~following proposition, we suggest a~criterion 
	for the~right (resp., left) integrality of a~right (resp., left) semi-abelian category. 
	
	\begin{prop}\label{sa-int2}
		A~right semi-abelian category $\mathcal{A}$ is right integral if and only if pushouts of monomorphisms along epimorphisms in~$\mathcal{A}$ are monomorphisms.
		
		Dually, a~left semi-abelian category $\mathcal{A}$ is left integral if and only if pullbacks of epimorphisms along monomorphisms in~$\mathcal{A}$ are epimorphisms.
	\end{prop}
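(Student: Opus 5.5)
Only the nontrivial direction needs work: assuming $\mathcal{A}$ is right semi-abelian and that pushouts of monomorphisms along epimorphisms are monomorphisms, we show $\mathcal{A}$ is right integral. The converse is immediate from the definition. The left-integral statement follows by duality, since the dual of a right semi-abelian category is left semi-abelian and pushouts dualize to pullbacks.

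So let $m\colon E\to F$ be a monomorphism and $s\colon E\to G$ an arbitrary morphism, with pushout $s_G\colon G\to S$, $s_F\colon F\to S$. We want $s_G$ to be a monomorphism. By right semi-abelianity, factor $s=ip$ with $p\colon E\to I$ an epimorphism and $i\colon I\to G$ a kernel. Form the pushout in two stages:
\begin{itemize}
\item first push out $m$ along $p$, obtaining $m'\colon I\to P$ and $p'\colon F\to P$;
\item then push out $m'$ along $i$, obtaining $m''\colon G\to S'$ and $i'\colon P\to S'$.
\end{itemize}
By the pasting lemma for pushouts (\cite[Lemma 5.1]{Kelly1969}, as already used in Proposition~\ref{t1.1}), the outer rectangle is a pushout of $m$ along $ip=s$. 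Hence we may take $s_G=m''$. By hypothesis $m'$ is a monomorphism, being the pushout of the monomorphism $m$ along the epimorphism $p$. It therefore remains to show that \emph{pushouts of monomorphisms along kernels are monomorphisms}. This is the main step.

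For that step, let $k\colon I\to G$ be a kernel, say $k=\ker h$, and $n\colon I\to P$ a monomorphism, with pushout $n'\colon G\to Q$, $k'\colon P\to Q$. Lemma~\ref{l1}(iii) (dual part) does not directly apply, since $k$ is a kernel, not a cokernel. The approach I would try first is to compute the kernel of $n'$ directly. Suppose $n'x=0$ for some $x\colon X\to G$. The pushout $Q$ is the cokernel of $\binom{k}{-n}\colon I\to G\oplus P$, so $(x,0)\colon X\to G\oplus P$ factors through $\binom{k}{-n}$ \emph{provided} that morphism is a kernel (a strict monomorphism). If it does factor, say as $\binom{k}{-n}y$, then $ny=0$, so $y=0$ since $n$ is mono, and hence $x=ky=0$.

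So everything reduces to showing that $\binom{k}{-n}$ is a kernel, namely of its own cokernel. This is the part I expect to be the real obstacle. The tools are that $k=\ker h$ and that $\binom{k}{-n}$ is a monomorphism because $k$ is. The naive candidate, $\binom{k}{-n}=\ker(h,0)$, is false, since the kernel of $(h,0)$ is $k\oplus 1_P$. Instead, I would use right semi-abelianity once more. Factor $\binom{k}{-n}=jq$ with $q$ an epimorphism and $j$ a kernel. Then $k=\pi_G jq$, and since $k=\ker h$ gives $h\pi_G j q=0$, we get $h\pi_G j=0$ because $q$ is epi. Hence $\pi_G j$ factors through $k$, yielding a retraction-like map that shows $q$ is split mono. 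Being also epi, $q$ is then an isomorphism, so $\binom{k}{-n}$ is a kernel.

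Should this argument fail in detail, the fallback is to reduce via the decomposition $n=\bar n\,\coim n$ and Lemma~\ref{l1}(iii) applied to cokernels.
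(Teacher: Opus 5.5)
Your proof is correct and follows essentially the same route as the paper. Both factor the pushing morphism as a kernel after an epimorphism, push out along the epimorphism first using the hypothesis, paste the two pushouts, and so reduce to a pushout of a monomorphism along a kernel. The only difference is in that last step: the paper cites Proposition~3.1 of Kopylov--Wegner to conclude that this pushout square is also a pullback, whence $\Ker f\cong\Ker f_0=0$, while your argument that $q$ is both split mono and epi shows $\binom{k}{-n}$ is a kernel, hence the kernel of the pushout map. That proves exactly this pullback property inline, so your proof is self-contained.
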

	
	\begin{proof}
		We prove the~first assertion.
		
		If $\mathcal{A}$ is right integral then the~assertion is obvious.
		
		Suppose that pushouts of monomorphisms along epimorphisms in~$\mathcal{A}$ are monomorphisms and consider a~pushout
		$$
		\begin{tikzcd}
			C \arrow[r, "g"] \arrow[d, "\alpha"'] & D \arrow[d, "\beta"] \\
			A \arrow[r, "f"'] & B
			\arrow[from=1-1, to=2-2, phantom, "\scriptstyle\text{PO}" description]
		\end{tikzcd}
		$$
		with $g$ a~monomorphism. Since $\mathcal{A}$ is right semi-abelian,
		$\overline{\alpha}$ is an~epimorphism, and so $\alpha=\alpha_1 \alpha_0$, where $\alpha_1:A'\to A$ is a~kernel and $\alpha_0:C\to A'$ is an~epimorphism. Consider the~pushout
		$$
		\begin{tikzcd}
			C \arrow[r, "g"] \arrow[d, "\alpha_0"'] & D \arrow[d, "\beta_0"] \\
			A' \arrow[r, "f_0"'] & B'
			\arrow[from=1-1, to=2-2, phantom, "\scriptstyle\text{PO}" description]
		\end{tikzcd}
		$$
		Since $f\alpha_1\alpha_0 = f\alpha = \beta g$ and the~above square
		is a~pushout, there exists a~unique morphism $\beta_1:B'\to B$ such that $f\alpha_1 = \beta_1 f_0$ and $\beta=\beta_1\beta_0$. Since $\alpha_0$ is an~epimorphism, it is known and easy to check that
		$$
		\begin{tikzcd}
			A' \arrow[r, "f_0"] \arrow[d, "\alpha_1"'] & A \arrow[d, "\beta_1"] \\
			A \arrow[r, "f"'] & B
			\arrow[from=1-1, to=2-2, phantom, "\scriptstyle\text{PO}" description]
		\end{tikzcd}
		$$
		is also a~pushout. Since $\mathcal{A}$ is right semi-abelian and $\alpha_1$ is a~kernel, we conclude that the~last square is a~pullback (see \cite[Proposition~3.1]{KW}). Thus, $0=\Ker f_0\equiv \Ker f$, and $f$ is a~monomorphism. 
	\end{proof}

	\section{Exact couples in an~integral category}\label{ex-couple}
	
	An exact couple in $\mathcal{A}$ is a diagram of the form
	\begin{equation}\label{ex-coup}
		\begin{tikzcd}
			D \arrow[]{rr}{\alpha}& &D \arrow[]{ld}{\beta}\\
			&E \arrow[]{ul}{\gamma}
		\end{tikzcd}
	\end{equation}
	such that $\im\alpha=\ker\beta$, $\im\beta=\ker\gamma$ and $\im\gamma=\ker\alpha$. Using Lemma~1 in \cite{Yak1979}, it is not hard to see that the last equalities are equivalent to $\cok\alpha=\coim\beta$, $\cok\beta=\coim\gamma$ and $\cok\gamma=\coim\alpha$, respectively. For instance, $\coim\beta=\cok\im\alpha=\cok\alpha$ and the dual computation yield the first equivalence.
	
	\begin{thm}\label{THM-1}\cite[Theorem 1.]{KW2} Let $\mathcal{A}$ be semiabelian and consider the exact couple \eqref{ex-coup}. Assume that $\alpha$, $\beta$ and $\gamma$ are strict. Then the Eckmann-Hilton construction below gives rise to the following two diagrams
		\begin{equation}\label{der-coup}
			\begin{tikzcd}
				D_1 \arrow[]{rr}{\alpha_1}& &D_1 \arrow[]{ld}{\beta_1^-}\\
				&E_1^- \arrow[]{ul}{\gamma_1^-}
			\end{tikzcd}\hspace{20pt}
			\begin{tikzcd}
				D_1 \arrow[]{rr}{\alpha_1}& &D_1 \arrow[]{ld}{\beta_1^+}\\
				&E_1^+ \arrow[]{ul}{\gamma_1^+}
			\end{tikzcd}
		\end{equation}
		which we call the \emph{left} resp.~the \emph{right derived couple}. The diagrams in \eqref{der-coup} have the following properties.\vspace{3pt}
		
		\begin{itemize}
			\item[(i)] Both diagrams are exact couples.
			
			\item[(ii)] With $\partial=\beta\gamma$ we get $H^-(E,\partial)=E_1^-$ and $H^+(E,\partial)=E_1^+$. Here, the \emph{left} resp.~\emph{right cohomology} is defined via $H^-(E,\partial)=\Cok(\theta\colon\Im\partial\rightarrow\Ker\partial)$ resp.~$H^+(E,\partial)=\Ker(\tau\colon\Cok\partial\rightarrow\Coim\partial)$, where $\theta$ and $\tau$ are the natural maps.
			
			\item[(iii)] There is a canonical bimorphism $\omega\colon E_1^-\rightarrow E_1^+$ satisfying the equations $\omega\beta_1^-=\beta_1^+$, $\gamma_1^-=\gamma_1^+\omega$ and $(\ker\tau)\omega(\cok\theta)=(\cok\partial)(\ker\partial)$. The morphism $\omega$ is uniquely determined by the third equation and it is an isomorphism if $\ker\partial$ or $\cok\partial$ is semistable.
		\end{itemize}
	\end{thm}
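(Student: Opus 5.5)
We will follow the Eckmann--Hilton construction, working only with the factorizations supplied by strictness. By Lemma~\ref{l1}(ii), each of $\alpha,\beta,\gamma$ factors as a kernel composed with a cokernel: for instance $\alpha=(\im\alpha)\bar\alpha\coim\alpha$ with $\bar\alpha$ an isomorphism. We set $D_1=\Im\alpha$ ($\cong\Coim\alpha$) and let $\alpha_1\colon D_1\to D_1$ be the restriction of $\alpha$, i.e.\ the unique map with $(\im\alpha)\alpha_1=\alpha\im\alpha$; it exists because $\alpha\im\alpha$ factors through $\im\alpha$. We then put $\partial=\beta\gamma$. Exactness $\im\beta=\ker\gamma$ gives $\gamma\beta=0$, so $\partial^2=0$, and hence both $\theta\colon\Im\partial\to\Ker\partial$ and $\tau\colon\Cok\partial\to\Coim\partial$ exist. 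We define $E_1^-=\Cok\theta$ and $E_1^+=\Ker\tau$, which gives (ii) by construction.

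Next we construct the maps. For $\gamma_1^-\colon E_1^-\to D_1$, we first observe that $\gamma\ker\partial$ lands in $\Ker\beta=\Im\alpha$, using $\beta\gamma\ker\partial=0$ and $\ker\beta=\im\alpha$. It therefore factors through $D_1$, and it kills $\Im\partial$ because $\gamma\beta=0$; so it descends to $\Cok\theta$. For $\beta_1^-\colon D_1\to E_1^-$, we use $D_1\cong\Coim\alpha=\Cok(\ker\alpha)=\Cok(\im\gamma)$ and take the map induced by $\beta$ followed by the projection. The right couple is built dually, via $\Ker\tau$ and $\Cok\alpha=\Coim\beta$. The morphism $\omega$ is the canonical comparison: $(\cok\partial)(\ker\partial)$ kills $\theta$ and composes to zero with $\tau$, so it factors uniquely as $(\ker\tau)\omega(\cok\theta)$. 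Since $\cok\theta$ is an epimorphism and $\ker\tau$ is a monomorphism, this factorization determines $\omega$ uniquely. The identities $\omega\beta_1^-=\beta_1^+$ and $\gamma_1^-=\gamma_1^+\omega$ then follow by cancelling these epis and monos.

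The main work is (i), exactness of the derived couples, and the fact that $\omega$ is a bimorphism. Here semi-abelianity enters, through the decompositions $f=ip$ with $i$ a monomorphism and $p$ a cokernel (resp.\ $p$ an epimorphism and $i$ a kernel), together with Lemma~\ref{l1}(iii): kernels pull back to kernels, and $\ker f=p_E\ker p_G$. Each equality, such as $\im\alpha_1=\ker\beta_1^-$, will be reduced to an identification of subobjects of $D_1$. We compute $\Ker\beta_1^-$ as a pullback of $\ker\beta$ along $\im\alpha$, which by Lemma~\ref{l1}(iii) is a kernel. We then compare this with $\Im(\alpha\im\alpha)$, using strictness of $\alpha$ to pass between images and coimages via Lemma~1 of \cite{Yak1979}. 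We expect the hardest step to be the two exactness conditions at $E_1^\pm$. There the map $\beta_1^-$ is not obviously strict, and one must show that $\im\beta_1^-$ really equals $\ker\gamma_1^-$ rather than merely factoring through it. We would attack this by writing $\ker\gamma_1^-$ through the pullback of $\ker\gamma=\im\beta$ along $\ker\partial$, and then pushing through $\cok\theta$ with the dual half of Lemma~\ref{l1}(iii).

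Finally, $\omega$ is a monomorphism because $\bar{(\cok\partial)(\ker\partial)}$ is a bimorphism in a semi-abelian category, and $\omega$ is essentially this canonical middle map. It is an isomorphism when $\ker\partial$ is semistable: the pushout of $\ker\partial$ along $\cok\theta$-type maps is then again a kernel, and hence $\omega$ is a kernel and an epimorphism, thus an isomorphism. The case of $\cok\partial$ semistable is dual.
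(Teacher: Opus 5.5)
The gap is part (i). Exactness of the derived couples is the substance of the theorem, and your proposal only announces a strategy for it.

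\textbf{The stated step is wrong.} The one concrete step you give fails: since $\ker\beta=\im\alpha=\rho$, the pullback of $\ker\beta$ along $\im\alpha$ is the pullback of the monomorphism $\rho$ along itself, i.e.\ $1_{D_1}$. That would give $\Ker\beta_1^-=D_1$ and $\beta_1^-=0$. Already for abelian $\mathcal{A}$ one has $\Ker\beta_1^-=\alpha^2D$, in general a proper subobject of $D_1=\alpha D$. You also never use the strictness of $\beta$ and $\gamma$, which is exactly what makes exactness at the two copies of $D_1$ work.

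\textbf{Part (ii) is not ``by construction''.} The theorem asserts that the objects produced by the pullback/pushout construction \eqref{eck-hil2} are $\Cok\theta$ and $\Ker\tau$. You must prove this rather than define $E_1^-:=\Cok\theta$. The proof is short:
\begin{itemize}
\item By Lemma~\ref{l1}(iii), the pullback of $\rho=\ker\beta$ along $\gamma$ is $\rho'=\ker(\beta\gamma)=\ker\partial$.
\item The pushout of $\beta'$ along the cokernel $\sigma=\cok\gamma$ has $\sigma'=\cok(\beta'\gamma)=\cok\theta$. This holds because $\beta'\gamma=\theta\bar\partial\coim\partial$ and $\bar\partial\coim\partial$ is epic by right semi-abelianity.
\end{itemize}
With this, your $\beta_1^-,\gamma_1^-$ agree with the Eckmann--Hilton maps.

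\textbf{Exactness at the target $D_1$.} For $\im\alpha_1=\ker\beta_1^-$, first note that $\beta'$ is strict, since $\beta$ is strict and $\rho'$ is a kernel (as in the proof of Theorem~\ref{COR-1}). Write $\beta'=kc$ with $c=\coim\beta'$ and $k$ a kernel, and push out along $\sigma$ in two steps.
\begin{itemize}
\item By Lemma~\ref{l1}(iii), the first step turns $c=\cok\ker\beta'$ into $\cok(\sigma\ker\beta')=\cok(\sigma\rho)=\cok\alpha_1$.
\item The second step turns $k$ into a monomorphism. In a right semi-abelian category a pushout square along a kernel is a pullback \cite[Proposition~3.1]{KW}, so Lemma~\ref{l1}(iii) applies.
\end{itemize}
Hence $\ker\beta_1^-=\ker\cok\alpha_1=\im\alpha_1$. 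In the integral case the paper replaces this step by Proposition~\ref{t1.1}.

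\textbf{Exactness at $E_1^-$.} For $\im\beta_1^-=\ker\gamma_1^-$, pulling back $\ker\gamma$ along $\ker\partial$, as you propose, only produces $\ker\gamma'$, where $\rho\gamma'=\gamma\rho'$. Two nontrivial facts are needed:
\begin{itemize}
\item $\im\beta'=\ker(\gamma\rho')=\ker\gamma'$, so $\cok\beta'=\coim\gamma'$. This is the second half of Lemma~\ref{second}, applied to $\cok\beta=\coim\gamma$ and $\beta=\rho'\beta'$.
\item The transfer across the cokernel $\sigma'$, which must be done on the cokernel side. Lemma~\ref{l1}(iii) gives $\cok\beta'=(\cok\beta_1^-)\sigma'$. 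The identity $\coim\gamma'=(\coim\gamma_1^-)\sigma'$ uses left semi-abelianity, namely that composites of cokernels are cokernels.
\end{itemize}
Cancelling the epimorphism $\sigma'$ gives $\cok\beta_1^-=\coim\gamma_1^-$.

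\textbf{Exactness at the source $D_1$.} For $\im\gamma_1^-=\ker\alpha_1$, dualize the argument for the target $D_1$, now using strictness of $\gamma$, to get $\cok\gamma_1^+=\coim\alpha_1$. Then $\cok\gamma_1^-=\cok(\gamma_1^+\omega)=\cok\gamma_1^+$, since $\omega$ is epic.

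\textbf{Part (iii).} Your treatment is essentially right, once you check that $\theta=\ker\phi$ and $\tau=\cok\phi$ for $\phi=(\cok\partial)(\ker\partial)$, so that $\omega=\bar\phi$. For the isomorphism statement, ``$\omega$ is a kernel'' needs justification; argue via Lemma~\ref{l1}(ii) instead. The pushout of $\ker\partial$ along $\cok\theta$ is a map $j\colon\Cok\theta\to\Cok\partial$ with $j\cok\theta=\phi$. If $j$ is a kernel, then $\phi$ is strict and $\omega$ is invertible.
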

	
	\begin{thm}\label{THM-2}\cite[Theorem 2.]{KW2} Let $\mathcal{A}$ be semiabelian and consider the exact couple \eqref{ex-coup}. Assume that $\beta$ and $\gamma$ are strict and that $\ker\gamma$ and $\cok\beta$ are semistable. If the powers $\alpha^k$ are strict for $1\leqslant{}k\leqslant{}n$, then the derivation process of Theorem \ref{THM-1} can be performed $n$ times, i.e., the Eckmann-Hilton construction gives rise to a complete full binary tree of depth $n$ consisting of exact couples.
	\end{thm}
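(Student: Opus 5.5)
We read Theorem~\ref{THM-2} by induction on the depth $n$, using Theorem~\ref{THM-1} as the one-step derivation and checking at each stage that its hypotheses are inherited. Theorem~\ref{THM-1} needs $\alpha,\beta,\gamma$ strict. The base case is given: $\alpha$ is strict ($k=1$), and $\beta,\gamma$ are strict by assumption, so Theorem~\ref{THM-1} produces the left and right derived couples. For the inductive step we must show three things for each of the two derived couples $(D_1,E_1^\pm,\alpha_1,\beta_1^\pm,\gamma_1^\pm)$: the maps $\beta_1^\pm,\gamma_1^\pm$ are strict; $\ker\gamma_1^\pm$ and $\cok\beta_1^\pm$ are semistable; and the powers $\alpha_1^k$ are strict for $1\le k\le n-1$. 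The binary tree of depth $n$ then results by applying the induction hypothesis with $n-1$ to each child.

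For the powers, recall the Eckmann--Hilton construction: $D_1=\Im\alpha$, and $\alpha_1$ is the map induced by $\alpha$ on $\Im\alpha$. Thus $(\im\alpha)\alpha_1^k=\alpha^{k}\,|_{\Im\alpha}$, and up to the canonical factorization of $\alpha$ we have $(\im\alpha)\alpha_1^k\overline{\alpha}\coim\alpha=\alpha^{k+1}$. The plan is to use Lemma~\ref{l1}(ii) to deduce strictness of $\alpha_1^k$ from strictness of $\alpha^{k+1}$ and of $\alpha$. Since $\alpha$ is strict, $\overline\alpha$ is an isomorphism, and we identify $\Coim\alpha$ with $\Im\alpha$. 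Now write $\alpha^{k+1}=(\im\alpha^{k+1})\,\overline{\alpha^{k+1}}\,(\coim\alpha^{k+1})$. The coimage of $\alpha^{k+1}$ factors through the cokernel $\coim\alpha$, because $\ker\alpha\le\ker\alpha^{k+1}$. Its image factors through the kernel $\im\alpha$. In a semi-abelian category these factors are again a cokernel and a kernel respectively; this uses the standard fact that if $pq$ is a cokernel then $p$ is a cokernel. That gives a cokernel--kernel factorization of $\alpha_1^k$, which is therefore strict by Lemma~\ref{l1}(ii). This shifts the index by one, and so accounts for the hypothesis being needed only up to $n$.

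For $\beta_1^\pm$ and $\gamma_1^\pm$, we use their descriptions from the construction. $\gamma_1^-$ is induced by $\gamma$ on the subquotient $E_1^-=\Cok\theta$, with values in $\Im\alpha$, and $\beta_1^+$ is induced from $\beta$ after lifting through $\alpha$. The semistability of $\ker\gamma$ and of $\cok\beta$ is exactly what is needed to make the relevant pushouts of kernels (respectively pullbacks of cokernels) again kernels (cokernels). Using this together with Lemma~\ref{l1}(iii), the plan is to show that $\ker\gamma_1^\pm$ and $\cok\beta_1^\pm$ arise as pullbacks or pushouts of $\ker\gamma$ and $\cok\beta$ along canonical maps, and hence are again semistable. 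A pushout of a semistable kernel is semistable, since pushouts compose. Strictness of $\beta_1^\pm$ and $\gamma_1^\pm$ should follow from exactness of the derived couple (Theorem~\ref{THM-1}(i)) combined with strictness of $\alpha_1$: for example, $\im\gamma_1=\ker\alpha_1$ is a kernel, and $\coim\beta_1=\cok\alpha_1$ is a cokernel. One then still needs $\overline{\gamma_1}$ to be an isomorphism.

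The main obstacle is this last step: proving $\gamma_1^\pm$ and $\beta_1^\pm$ strict, and not merely that they have kernel images. Here the semistability hypotheses must enter, most likely through Theorem~\ref{THM-1}(iii). When $\ker\partial$ or $\cok\partial$ is semistable, the map $\omega$ is an isomorphism, so the left and right derived couples coincide. One then has both descriptions $\gamma_1=\gamma_1^+\omega$ and $\beta_1^+=\omega\beta_1^-$ available, giving a cokernel-type factorization on one side and a kernel-type factorization on the other. So I would first try to derive the semistability of $\ker\partial$, where $\partial=\beta\gamma$, from that of $\ker\gamma$ and $\cok\beta$. Combined with the two descriptions, this should exhibit $\gamma_1$ as (kernel)$\circ$(cokernel), and Lemma~\ref{l1}(ii) would then give strictness.
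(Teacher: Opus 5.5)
The paper quotes this theorem without proof, so I judge your proposal on its own. Your induction scheme and your treatment of the powers are correct: from $\alpha^{k+1}=\rho\,\alpha_1^k\,\sigma$ with $\rho$ a kernel and $\sigma$ a cokernel, one gets $\overline{\alpha^{k+1}}=\overline{\alpha_1^k}$, so $\alpha_1^k$ is strict for $k\le n-1$.

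The genuine gap is everything else, which is the actual content of the theorem. You never show that $\beta_1^\pm,\gamma_1^\pm$ are strict, nor that $\ker\gamma_1^\pm$ and $\cok\beta_1^\pm$ are semistable.

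\begin{itemize}
\item Your remarks that $\im\gamma_1$ is a kernel and $\coim\beta_1$ a cokernel are vacuous, since this holds for every morphism.
\item The route through Theorem~\ref{THM-1}(iii) is not viable. $\ker\partial=\rho'$ is the pullback of $\ker\beta=\im\alpha$ along $\gamma$, and nothing in the hypotheses controls pushouts of it. The paper invokes quasi-abelianness in Theorem~\ref{COR-2} exactly to obtain this. The theorem is also deliberately stated as a binary tree, so the two derived couples are not claimed to coincide.
\item Even with $\omega$ invertible, the relations $\gamma_1^-=\gamma_1^+\omega$ and $\omega\beta_1^-=\beta_1^+$ give no kernel--cokernel factorization of either map.
\item $\ker\gamma_1^\pm$ and $\cok\beta_1^\pm$ are not literally pushouts or pullbacks of $\ker\gamma$, $\cok\beta$. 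Each arises only after a cancellation step.
\end{itemize}

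The missing idea is to use semistability directly to keep the Eckmann--Hilton pullbacks and pushouts strict. This needs two cancellation facts valid in a semi-abelian category:
\begin{itemize}
\item[(a)] $\overline{x\sigma}=\overline{x}$ for a cokernel $\sigma$, and $\overline{\rho x}=\overline{x}$ for a kernel $\rho$;
\item[(b)] if $ab$ is a kernel then $b$ is a kernel, and dually, if $pq$ is a cokernel then $p$ is a cokernel.
\end{itemize}

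For the left couple:
\begin{itemize}
\item Write $\gamma=(\ker\alpha)(\overline{\gamma}\coim\gamma)$, where $\coim\gamma=\cok\beta$ is semistable. Its pullback $\gamma'$ along $\rho$ is then (kernel)(cokernel), and $\coim\gamma'$ is a pullback of $\cok\beta$.
\item Since $\gamma_1^-\sigma'=\gamma'$, fact (a) makes $\gamma_1^-$ strict.
\item From $(\coim\gamma_1^-)\sigma'=\coim\gamma'$ and (b), applied after pulling back, $\cok\beta_1^-=\coim\gamma_1^-$ is semistable.
\item Since $\ker\gamma=\rho'\ker\gamma'$, fact (b) applied after pushing out shows that $\ker\gamma'$ is semistable.
\item $\beta'$ is strict with $\im\beta'=\ker\gamma'$ (proof of Theorem~\ref{COR-1}). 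Hence its pushout $\beta_1^-$ along $\sigma$ splits into a pushout of a cokernel followed by a pushout of $\ker\gamma'$.
\item So $\beta_1^-$ is strict, and $\ker\gamma_1^-=\im\beta_1^-$ is a pushout of a semistable kernel, hence semistable.
\end{itemize}

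The right couple is dual:
\begin{itemize}
\item $\beta''$ is the pushout of $\beta=(\ker\gamma)(\overline{\beta}\coim\beta)$. It is strict because $\ker\gamma$ is semistable.
\item Hence $\beta_1^+$ is strict by (a), and $\ker\gamma_1^+$ is semistable by (b), using $\rho''\im\beta_1^+=\im\beta''$.
\item $\gamma''\sigma''=\gamma$ makes $\gamma''$ strict and $\coim\gamma''$ semistable.
\item So the pullback $\gamma_1^+$ of $\gamma''$ along $\rho$ is strict, with semistable coimage $\cok\beta_1^+$.
\end{itemize}

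These are exactly the steps your induction needs in order to continue, and the proposal does not supply them.
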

	
	Here we specify these theorems for integral categories. Namely, the~following assertions hold: 
	
	\begin{thm}\label{COR-1}
		If, in Theorem \ref{THM-1}, the category $\mathcal{A}$ is assumed to be integral, then the conclusion remains valid for arbitrary morphisms $\gamma$ and $\beta$, not necessarily strict.
	\end{thm}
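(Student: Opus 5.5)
We do not re-prove Theorem~\ref{THM-1} from scratch. Instead we go through the proof in \cite{KW2}, find the places where strictness of $\beta$ and $\gamma$ is used, and show that integrality of $\mathcal{A}$ does the same job there. Since $\mathcal{A}$ is integral, it is semi-abelian, so Theorem~\ref{THM-1} applies formally once strictness is no longer needed. In the Eckmann--Hilton construction, strictness of $\beta$ and $\gamma$ serves one purpose: to pass from the exactness conditions $\im\alpha=\ker\beta$, $\im\beta=\ker\gamma$, $\im\gamma=\ker\alpha$ to the factorizations of $\beta$ and $\gamma$ through the relevant (co)images. Concretely, we need the induced maps $\beta_1^\pm$ and $\gamma_1^\pm$ to be defined and the resulting triangles to be exact at $E_1^\pm$ and at $D_1=\Im\alpha$. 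Strictness of $\alpha$ is kept, since $D_1$ and $\alpha_1$ are built from $\im\alpha$ and $\coim\alpha$.

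First, we rewrite the objects $E_1^-=H^-(E,\partial)$ and $E_1^+=H^+(E,\partial)$ with $\partial=\beta\gamma$. We use the canonical factorizations $\beta=(\im\beta)\bar\beta\coim\beta$ and $\gamma=(\im\gamma)\bar\gamma\coim\gamma$, where now $\bar\beta$ and $\bar\gamma$ are only bimorphisms. The key identifications are $\Ker\partial$ and $\Coker\partial$. Since $\ker\beta=\im\alpha$ and $\bar\beta\coim\beta$ composed with $\im\beta$ fits into a pullback along $\im\beta$, we want Lemma~\ref{l1}(iii) to give $\ker\partial$ as the pullback of the kernel $\ker\beta$ along $\gamma$. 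We also want $\cok\partial$ as the pushout of $\cok\gamma$ along $\beta$. Here integrality is essential. Pulling back the epimorphism $\bar\beta\coim\beta$, or pushing out the monomorphism $(\im\gamma)\bar\gamma$, preserves epi- or monomorphicity. So the comparison maps that strictness used to make isomorphisms now become bimorphisms that are stable under the relevant base changes, as in Proposition~\ref{sa-int1}.

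Second, we use Proposition~\ref{t1.1} in place of strictness at the step where exactness of the derived couple at $E_1^\pm$ is checked. In \cite{KW2}, the kernel of $\gamma_1^-$ is computed by identifying a certain map $\Ker(\cdot)\to\Ker(\cdot)$ arising from a pushout square as an isomorphism. Proposition~\ref{t1.1}(ii) gives that this $\hat\alpha$ is an epimorphism, and it is a monomorphism for trivial reasons because it is a restriction of a monomorphism. This yields the required isomorphism up to a bimorphism. Dually, for $\beta_1^+$ we use $\hat\beta$ from a pullback square. We then check that $\im\beta_1^-=\ker\gamma_1^-$, and similarly for the other vertices. We do this via Lemma~\ref{l1}(i),(ii), using the criterion that the equality of image and kernel only requires the comparison map $\Im\to\Ker$ to be an isomorphism, and that map is a kernel factorization of a kernel through a kernel.

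The main obstacle will be exactness at $E_1^-$ and $E_1^+$ themselves. There one must show that a canonical map is an isomorphism, not merely a bimorphism, and integrality only directly supplies the epi/mono halves. I would first try to exhibit the relevant morphism as a kernel, using Lemma~\ref{l1}(iii), since kernels pull back to kernels. A kernel that is also an epimorphism is an isomorphism. If this fails, the fallback is to invoke the bimorphism $\omega$ of Theorem~\ref{THM-1}(iii) and prove semistability of $\ker\partial$ via integrality, which would make $\omega$ an isomorphism and collapse the two couples.
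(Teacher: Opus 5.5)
There is a genuine gap. You pick the right tool (Proposition~\ref{t1.1}), but you apply it at the wrong vertex and to the wrong map, and you never isolate the one step where strictness of $\beta,\gamma$ matters.

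Exactness at $E_1^-$ is not the obstacle; it needs only semi-abelianness. From $\beta=\rho'\beta'$, $\cok\beta=\coim\gamma$ and Lemma~\ref{second} you get $\im\beta'=\ker(\gamma\rho')=\ker(\rho\gamma')=\ker\gamma'$, hence $\coim\gamma'=\cok\beta'$. Two further identities hold: $\cok\beta'=(\cok\beta_1^-)\sigma'$ by the pushout, and $\coim\gamma'=(\coim\gamma_1^-)\sigma'$ since $\gamma'=\gamma_1^-\sigma'$ with $\sigma'$ a cokernel. Cancelling the epimorphism $\sigma'$ yields $\cok\beta_1^-=\coim\gamma_1^-$, i.e.\ $\im\beta_1^-=\ker\gamma_1^-$.

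The step that needs a new argument is exactness at $D_1$, namely $\im\alpha_1=\ker\beta_1^-$. The observation you are missing is $\ker\beta'=\ker(\rho'\beta')=\ker\beta=\rho$. So in the Eckmann--Hilton pushout $\beta_1^-\sigma=\sigma'\beta'$, the induced map $\widehat\sigma\colon D_1=\Ker\beta'\to\Ker\beta_1^-$ satisfies $(\ker\beta_1^-)\widehat\sigma=\sigma\rho=\alpha_1$. Proposition~\ref{t1.1} makes $\widehat\sigma$ an epimorphism. For a kernel $k$ and an epimorphism $e$ one has $\cok(ke)=\cok k$, hence $\im(ke)=k$, which gives $\im\alpha_1=\ker\beta_1^-$. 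Dually $\cok\gamma_1^+=\coim\alpha_1$. Then $\im\gamma_1^-=\ker\alpha_1$ follows from $\gamma_1^-=\gamma_1^+\omega$ with $\omega$ epic, since this gives $\cok\gamma_1^-=\cok\gamma_1^+$.

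Several of your concrete claims also fail:
\begin{itemize}
\item The induced map on kernels is not a monomorphism ``for trivial reasons''. The vertical side of the relevant pushout is the cokernel $\sigma$, not a monomorphism, and $\widehat\sigma$ is monic only if $\alpha_1$ is, which fails in general.
\item An ``isomorphism up to a bimorphism'' would not suffice in a semi-abelian category. What makes the epimorphism half enough is that the other factor, $\ker\beta_1^-$, is already a kernel.
\item Your fallback would derive semistability of $\ker\partial$ from integrality, making $\omega$ an isomorphism and the two derived couples equal in every integral category. Integrality does not in general make kernels semistable, which is exactly why Theorem~\ref{COR-2} adds the quasi-abelian hypothesis.
\item Integrality does not enter where you place it. The maps $\beta_1^\pm,\gamma_1^\pm$ are defined by universal properties with no strictness needed. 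Identifying $\ker\partial$ and $\cok\partial$ via Lemma~\ref{l1}(iii) works in any preabelian category.
\end{itemize}
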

	
	\begin{thm}\label{COR-2}
		If, in Corollary \ref{COR-1}, the category $\mathcal{A}$ is assumed to be quasi-abelian, then the right and left derived couples coincide.
	\end{thm}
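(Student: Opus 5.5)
The plan is to reduce Theorem~\ref{COR-2} to part~(iii) of Theorem~\ref{THM-1}, which is available here through Theorem~\ref{COR-1}. Since $\mathcal{A}$ is quasi-abelian, it is in particular integral, so Theorem~\ref{COR-1} applies to the exact couple~\eqref{ex-coup} with $\beta,\gamma$ arbitrary and $\alpha$ strict. It therefore produces both derived couples~\eqref{der-coup} together with the canonical bimorphism $\omega\colon E_1^-\to E_1^+$ satisfying $\omega\beta_1^-=\beta_1^+$ and $\gamma_1^-=\gamma_1^+\omega$. The two derived couples share $D_1$ and $\alpha_1$. So to show that they coincide, it suffices to show that $\omega$ is an isomorphism; then $(E_1^-,\beta_1^-,\gamma_1^-)$ and $(E_1^+,\beta_1^+,\gamma_1^+)$ are identified via $\omega$.

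By Theorem~\ref{THM-1}(iii), $\omega$ is an isomorphism as soon as $\ker\partial$ or $\cok\partial$ is semistable, where $\partial=\beta\gamma$. In a quasi-abelian category every kernel is semistable: the category is right quasi-abelian, so pushouts of kernels along arbitrary morphisms are again kernels, and this is exactly the definition of semistability. In particular $\ker\partial$ is semistable, and dually $\cok\partial$ is semistable as well, although one of the two suffices. Hence $\omega$ is an isomorphism.

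The main point to verify is that the proof of Theorem~\ref{COR-1} really yields the full statement of Theorem~\ref{THM-1}(iii), including the criterion for $\omega$ to be an isomorphism, and not merely the exactness of the two couples. Concretely, I would check that the construction of $\omega$ from $(\ker\tau)\omega(\cok\theta)=(\cok\partial)(\ker\partial)$ and the semistability argument of \cite{KW2} use strictness of $\beta$ and $\gamma$ only to guarantee that the objects $E_1^\pm$ are well defined with the stated universal properties. If so, the argument carries over verbatim under integrality.

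As a fallback, I would argue directly that $H^-(E,\partial)\cong H^+(E,\partial)$. In a quasi-abelian category the sequence $\Im\partial\to\Ker\partial\to\Cok\partial$ computes left and right cohomology that agree: the canonical comparison map is a bimorphism that is strict because of semistability, and it is therefore an isomorphism (cf.~\cite{Schneiders1999}). Together with Theorem~\ref{THM-1}(ii), this identifies $E_1^-$ with $E_1^+$ compatibly with $\beta_1^\pm$ and $\gamma_1^\pm$.
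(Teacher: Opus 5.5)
Your argument is the paper's proof: quasi-abelianness makes $\ker\partial$ (and $\cok\partial$) semistable, so the bimorphism $\omega$ of Theorem~\ref{THM-1}(iii), available in the setting of Theorem~\ref{COR-1}, is an isomorphism identifying the two derived couples. One correction: a quasi-abelian category need not be integral (in the quasi-abelian category of Banach spaces, pulling back the dense inclusion $\ell^1\to\ell^2$ along the line spanned by a vector of $\ell^2\setminus\ell^1$ gives $0\to\mathbb{C}$, which is not an epimorphism), so integrality must come from the standing hypotheses of Theorem~\ref{COR-1} rather than be deduced from quasi-abelianness.
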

	
	\smallskip
	
	Let $\mathcal{A}$ be preabelian. We fix the exact couple \eqref{ex-coup} and assume that the morphism $\alpha$ is strict, i.e., it has a decomposition $\alpha=\rho\sigma$ with a kernel $\rho$ and a cokernel $\sigma$. Taking the exactness into account it follows that $\rho=\ker\beta$ and $\sigma=\cok\gamma$ and we can consider the diagram
	\begin{equation}\label{eck-hil1}
		\begin{tikzcd}
			D_1  & &D_1 \arrow[tail]{d}{\rho}\\
			D \arrow[two heads]{u}{\sigma} \arrow[]{r}{\beta} & E \arrow[]{r}{\gamma}&  D
		\end{tikzcd}
	\end{equation}
	with $D_1=\Im\alpha=\Coim\alpha=\Ker\beta=\Cok\gamma$. From \eqref{eck-hil1} we get the two diagrams
	\begin{equation}\label{eck-hil2}
		\begin{tikzcd}[column sep=1.1ex, row sep=1.1ex]
			D_1\arrow[]{rr}{\beta_1^-}  && E_1^-\arrow[]{rr}{\gamma_1^-} &&D_1 \arrow[equal]{dd}\\
			&\text{\footnotesize\rm PO}&&\text{\footnotesize\rm\phantom{PB}}&\\
			D\arrow[two heads]{uu}{\sigma}\arrow[]{rr}{\beta'}   && E_{\rho}\arrow[two heads]{uu}[swap]{\sigma'}\arrow[]{rr}{\gamma'}\arrow[tail]{dd}{\rho'} &&D_1 \arrow[tail]{dd}{\rho}\\
			&\text{\footnotesize\rm\phantom{PB}}&&\text{\footnotesize\rm PB}&\\
			D \arrow[equal]{uu} \arrow[]{rr}{\beta} && E \arrow[]{rr}{\gamma}&&  D
		\end{tikzcd}\hspace{30pt}
		\begin{tikzcd}[column sep=1.1ex, row sep=1.1ex]
			D_1\arrow[]{rr}{\beta_1^+} && E_1^+\arrow[]{rr}{\gamma_1^+}\arrow[tail]{dd}{\rho''}&& D_1 \arrow[tail]{dd}{\rho}\\
			&\text{\footnotesize\rm\phantom{PB}}&&\text{\footnotesize\rm PB}&\\
			D_1\arrow[equal]{uu}\arrow[]{rr}{\beta''} && E^{\sigma}\arrow[]{rr}{\gamma''} &&D\arrow[equal]{dd} \\
			&\text{\footnotesize\rm PO}&&\text{\footnotesize\rm\phantom{PB}}&\\
			D \arrow[two heads]{uu}{\sigma} \arrow[]{rr}{\beta} && E \arrow[]{rr}{\gamma}\arrow[two heads]{uu}[swap]{\sigma''}&&  D
		\end{tikzcd}
	\end{equation}
	by the classical construction of Eckmann, Hilton \cite{EH}. We start with the left diagram. We first form the pullback to obtain $\gamma'$ and $\rho'$. Then we apply its universal property and get $\beta'$ with $\gamma'\beta'=0$ and $\rho'\beta'=\beta$. Next, we form the pushout to obtain $\sigma'$ and $\beta_1^-$. By the universal property of the pushout we finally get $\gamma_1^-$ with $\gamma_1^-\beta_1^-=0$ and $\gamma_1^-\sigma'=\gamma'$. By \cite[Lemma~2.1(iii)]{KW}, $\rho'$ is a kernel and $\sigma'$ is a cokernel. The right diagram is constructed dually. The diagrams in \eqref{der-coup} are obtained by setting $\alpha_1=\sigma\rho$.
	
	\smallskip
	
	\begin{lem}\label{second}\cite[Lemmas 3 and 4]{KW2} Let $\mathcal{A}$ be left semiabelian and let $\alpha$, $\beta$ and $\rho$ be morphisms. If $\alpha=\cok\beta$ and $\im\beta=\ker(\rho\alpha)$ then $\rho$ is a monomorphism.
		
		Dually, let $\mathcal{A}$ be right semiabelian and let $\alpha$, $\beta$ and $\rho$ be morphisms. If $\coim\alpha=\cok(\rho\beta)$ and $\rho$ is a kernel then $\im\beta=\ker(\alpha\rho)$.
	\end{lem}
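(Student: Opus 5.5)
The plan is to prove the two assertions separately; the second is not the formal dual of the first. The common idea is to turn the hypothesis involving $\ker$ or $\operatorname{im}$ into a factorization, and then use one structural property of one-sided semi-abelian categories. I intend to take both properties from Rump~\cite{Rump2001} rather than reprove them, and I flag them as the main risk below.

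\emph{First assertion.} Let $k=\ker\rho\colon K\to Y$; it suffices to show $K=0$. Form the pullback of the cokernel $\alpha\colon X\to Y$ along $k$, with projections $\alpha'\colon P\to K$ and $k'\colon P\to X$. By Lemma~\ref{l1}(iii), $k'$ is a kernel, though only the relation $\alpha k'=k\alpha'$ is needed. Then $\rho\alpha k'=\rho k\alpha'=0$, so $k'$ factors through $\ker(\rho\alpha)=\im\beta$, say $k'=(\im\beta)u$. Since $\alpha=\cok\beta=\cok\im\beta$, we have $\alpha\im\beta=0$, hence $k\alpha'=\alpha k'=0$. As $k$ is a monomorphism, $\alpha'=0$. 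If $\alpha'$ is an epimorphism, this forces $\mathrm{id}_K=0$, i.e.\ $K=0$, so $\rho$ is a monomorphism.

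The point needing justification is that in a left semi-abelian category the pullback of a cokernel along any morphism is an epimorphism. I expect this to be the main obstacle. My first choice is to quote Rump's characterization of left semi-abelian categories by exactly this property \cite{Rump2001}. Failing that, I would derive it from the factorization $f=ip$ ($p$ a cokernel, $i$ a monomorphism) applied to the pullback projection, together with Lemma~\ref{l1}(iii).

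\emph{Second assertion.} The inclusion is easy. Since $\alpha$ factors through $\coim\alpha=\cok(\rho\beta)$, we get $\alpha\rho\beta=0$, so $\alpha\rho\im\beta=0$ and $\im\beta$ factors through $\ker(\alpha\rho)$. For the reverse, let $m=\ker(\alpha\rho)$. Then $\alpha(\rho m)=0$, so $\rho m$ factors through $\ker\alpha=\ker\coim\alpha$, giving $(\coim\alpha)\rho m=0$. Thus $\rho m$ factors through $\ker\coim\alpha=\ker\cok(\rho\beta)=\im(\rho\beta)$.

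It remains to identify $\im(\rho\beta)$ with $\rho\im\beta$. Write $\beta=(\im\beta)\beta_0$; in a right semi-abelian category $\beta_0$ can be taken to be an epimorphism. Then $\rho\beta=(\rho\im\beta)\beta_0$ is a kernel composed with an epimorphism, provided $\rho\im\beta$ is a kernel. For that I would use that in a right semi-abelian category a composite of kernels is a kernel (Rump), which is the second delicate point. Granting it, the uniqueness of such factorizations in the right semi-abelian setting gives $\im(\rho\beta)=\rho\im\beta$. So $\rho m=\rho(\im\beta)v$ for some $v$, and cancelling the monomorphism $\rho$ gives $m=(\im\beta)v$. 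Together with the inclusion above, this yields $\im\beta=\ker(\alpha\rho)$.
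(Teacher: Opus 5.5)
Your proof is correct. The paper gives no argument for this lemma of its own; it quotes it from \cite{KW2}. So your proof has to stand on its own, and it does, because both facts you flag are true and have short proofs.

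For the first fact, do not factor the projection $\alpha'$ itself; that alone does not obviously help. Instead factor $\phi=(\alpha,-k)\colon X\oplus K\to Y$, whose kernel is your pullback. Write $\phi=iq$ with $q$ a cokernel and $i$ monic, and let $\iota_X\colon X\to X\oplus K$ be the inclusion. Then $iq\iota_X\ker\alpha=\alpha\ker\alpha=0$, so $q\iota_X=j\alpha$ for some $j$. Hence $\alpha=ij\alpha$, so $ij=1$, and $i$ is an isomorphism. Thus $\phi$ is a cokernel, the pullback square is also a pushout, and $\alpha'$ is epic because $\alpha$ is.

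For the second fact, let $a$ and $b$ be kernels. Write $ba=(\im(ba))e$, where $e$ is monic and, by right semi-abelianness, epic. Since $\cok b$ kills $ba$, it kills $\im(ba)$, so $\im(ba)=by$ and hence $a=ye$. Since $a$ is a kernel and $e$ is epic, $y=az$. Then $ze=1$, and $e$ is invertible.

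You also do not need $\beta_0$ to be epic. The equality $\cok(\rho\beta)=\cok(\rho\im\beta)$ holds in any preabelian category, so $\im(\rho\beta)=\rho\im\beta$ as soon as $\rho\im\beta$ is a kernel. This identity for a kernel $\rho$ is exactly what the paper later quotes from \cite{KW} in the proof of Theorem~\ref{COR-1}. So your second half matches the paper's own toolkit.

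For the first assertion there is a much shorter route that avoids pullbacks entirely:
\begin{itemize}
\item $\ker(\rho\alpha)=\im\beta=\ker\cok\beta=\ker\alpha$.
\item Hence $\coim(\rho\alpha)=\cok\ker\alpha=\alpha$ by Lemma~\ref{l1}(i).
\item So $\rho\alpha=\im(\rho\alpha)\,\overline{\rho\alpha}\,\alpha$, and cancelling the epimorphism $\alpha$ gives $\rho=\im(\rho\alpha)\,\overline{\rho\alpha}$.
\item This is a composite of monomorphisms, since $\overline{\rho\alpha}$ is monic in a left semi-abelian category.
\end{itemize}
Your pullback argument reaches left semi-abelianness through a stronger, equivalent characterization, so it costs an extra citation or lemma; the direct version uses only the definition.

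You are also right that the second statement is not the formal dual of the first, despite the word ``Dually'' in the statement.
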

	
	Now, we prove Theorems \ref{COR-1} and \ref{COR-2}.
	
	\begin{proof}\textit{(of Theorem \ref{COR-1})} (i) We show $\im\beta_1^-=\ker\gamma_1^-$, $\im\alpha_1=\ker\beta_1^-$ and $\im\gamma_1^-=\ker\alpha_1$. By Lemma~\ref{second}, $\cok\beta=\coim\gamma$ and $\beta=\rho'\beta'$ imply that $\im\beta'=\ker(\gamma\rho')$. Hence, $\im\beta'=\ker(\rho\gamma')=\ker\gamma'$ and therefore $\coim\gamma'=\cok\im\beta'=\cok\beta'$. By \cite[Lemma 2.1(iii)]{KW}, we have $\cok\beta'=(\cok\beta_1^-)\sigma'$. In addition $\gamma'=\gamma_1^-\sigma'$ and $\sigma'$ is a cokernel. Therefore, $\coim\gamma'=(\coim\gamma_1^-)\sigma'$ by \cite[Cor.~2.3(ii) and Proposition~3.2]{KW}. We compute $(\cok\beta_1^-)\sigma'=\cok\beta'=\coim\gamma'= (\coim\gamma_1^-)\sigma'$ which gives $\cok\beta_1^-=\coim\gamma_1^-$ since $\sigma'$ is epic. This is equivalent to $\im\beta_1^-=\ker\gamma_1^-$.
		\smallskip
		\\We have $\beta=\rho'\beta'$, $\beta$ is strict and $\rho'$ is a kernel. Thus, $\rho'\beta'=\im(\rho'\beta')\bar{\beta}\coim(\rho'\beta')=\rho'(\im\beta')\bar{\beta}\coim(\rho'\beta')$ where the last equality follows from \cite[Corollary~2.3(i) and Proposition~3.1]{KW} and $\bar{\beta}$ is an isomorphism. We get $\beta'=(\im\beta')\bar{\beta}\coim(\rho'\beta')$ and thus $\beta'$ is strict by \cite[Remark~1.1.2(c)]{Schneiders1999}. Now we can use Proposition \ref{t1.1} to conclude that the morphism $\widehat\sigma$ defined by the equality $\sigma(\ker\beta')=(\ker\beta_1^-)\widehat\sigma$ is an epimorphism. We have $\ker\beta'=\ker(\rho'\beta')=\ker\beta=\rho$ and hence $\alpha_1=\sigma\rho=(\ker\beta_1^-)\widehat\sigma$. Therefore, $\im\alpha_1=\ker\beta_1^-$ by \cite[Corollary~2.3(i) and Proposition~3.1]{KW}.
		
		\smallskip
		
		By dualizing the last paragraph, we obtain $\cok\gamma_1^+=\coim\alpha_1$ and thus $\cok\gamma_1^-=\cok(\gamma_1^+\omega)=\cok\gamma_1^+$ with $\omega$ as in Theorem~\ref{THM-1}. Therefore, $\cok\gamma_1^-=\coim\alpha_1$ which is equivalent to $\im\gamma_1^-=\ker\alpha_1$.
		
		\smallskip
		
		The exactness of the second couple can be proved dually. The rest of the proof is exactly the same as that of \cite[Theorem 1.]{KW2}.
	\end{proof}
	
	\begin{proof}\textit{(of Theorem \ref{COR-2})}
		Since in a quasi-abelian category kernels are stable under pushouts and cokernels are stable under pullbacks, both $\ker \partial$ and $\coker \partial$ are, in particular, semistable. Hence, by \cite[Theorem~1(iii)]{KW2}, the associated derived couples coincide.
	\end{proof}

	\section{An Example of one-sided integrality}\label{Example}
	
	In this section, we present an elegant example of a non-semi-abelian preabelian category
	which was considered by Jeremy Rickard on Math StackExchange.${}^1$\footnote{%
		${}^1$ \url{https://math.stackexchange.com/questions/3631067/example-of-a-pre-abelian-category-but-not-a-semi-abelian-category}}
	Below we prove that it is right integral but not left integral.
	Since a~semi-abelian category is left integral if and only if it is right integral \cite[p.~12, Corollary]{Rump2001}, such a~category is necessarily not left semi-abelian.
	
	Let $\mathcal{C}$ be the category of diagrams
	$$
	U \xrightarrow{\alpha} V \xrightarrow{\beta} W
	$$
	of vector spaces over a fixed field $k$. Since the category of vector spaces over a fixed field is abelian, it follows that $\mathcal{C}$ is preabelian because kernels and cokernels are computed componentwise:
	$$
	\begin{tikzcd}
		\Ker f \arrow[d] \arrow[r, "\alpha''"] & \Ker g \arrow[d] \arrow[r, "\beta''"] & \Ker h \arrow[d] \\
		U \arrow[d, "f"] \arrow[r, "\alpha"]   & V \arrow[d, "g"] \arrow[r, "\beta"]   & W \arrow[d, "h"] \\
		U' \arrow[r, "\alpha'"] \arrow[d]      & V' \arrow[r, "\beta'"] \arrow[d]      & W' \arrow[d]     \\
		\Coker f \arrow[r]                     & \Coker g \arrow[r]                    & \Coker h.
	\end{tikzcd} \eqno(1)
	$$
	A morphism $(f,g,h)$ in $\mathcal{C}$ is strict if and only if each of the morphisms $f$, $g$, and $h$ is strict in the category of vector spaces. Hence $\mathcal{C}$ is an abelian category.
	
	Let $\mathcal{D}$ be the full subcategory of $\mathcal{C}$ consisting of those diagrams for which
	$$
	V=\im \alpha+\ker \beta.
	$$
	The subcategory $\mathcal{D}$ is closed under quotients in $\mathcal{C}$, and therefore cokernels in $\mathcal{D}$ exist and coincide with those in $\mathcal{C}$. It follows that $\mathcal{D}$ also has kernels. Indeed, let
	$$
	\Ker f \xrightarrow{\alpha''} \Ker g \xrightarrow{\beta''} \Ker h
	$$
	be the kernel of a morphism $(f,g,h)$ in $\mathcal{C}$. Then the kernel in $\mathcal{D}$ is given by
	$$
	\Ker f \xrightarrow{\alpha''} \im \alpha''+\ker \beta'' \xrightarrow{\beta''} \Ker h.
	$$
	Thus, $\mathcal{D}$ is a preabelian category.
	
	Now consider the following nonzero morphism:
	$$
	\begin{tikzcd}
		k \arrow[r, equal] \arrow[d, equal] & k \arrow[r, equal] \arrow[d] & k \arrow[d] \\
		k \arrow[r]                         & 0 \arrow[r]                  & 0.
	\end{tikzcd}
	$$
	Its coimage in $\mathcal{D}$ is
	$$k \to k \to 0,$$
	whereas its image is
	$$k \to 0 \to 0.$$
	Therefore, the canonical morphism from the coimage to the image is not a monomorphism. Hence $\mathcal{D}$ is not left semi-abelian, and therefore it is left integral.
	\par Observe that a morphism $(f,g,h)$ is a monomorphism in $\mathcal{D}$ if and only if it is a monomorphism in $\mathcal{C}$.
	
	Indeed, suppose that $(f,g,h)$ is a monomorphism in $\mathcal{D}$, and consider the diagram $(1)$ where
	$$
	\Ker f \xrightarrow{\alpha''} \Ker g \xrightarrow{\beta''} \Ker h
	$$
	is the kernel of $(f,g,h)$ in $\mathcal{C}$. Then
	$$
	\Ker f \xrightarrow{\alpha''} \bigl(\im \alpha''+\ker \beta''\bigr) \xrightarrow{\beta''} \Ker h
	$$
	is the kernel of $(f,g,h)$ in $\mathcal{D}$. Since $(f,g,h)$ is a monomorphism in $\mathcal{D}$, it follows that
	$$
	\Ker f=\im \alpha''+\ker \beta''=\Ker h=0.
	$$
	Since $\Ker f=0$, we have $\im \alpha''=0$, and therefore $\ker \beta''=0$. If $\Ker g\neq 0$, then
	$$
	\ker\bigl(\Ker g \xrightarrow{\beta''} 0\bigr)=\Ker g\neq 0,
	$$
	which is impossible. Hence $\Ker g=0$. Therefore $(f,g,h)$ is a monomorphism in $\mathcal{C}$. 
	\par We next show that $\mathcal{D}$ is closed under direct sums.\\
	Let
	$$
	X=(U_1 \xrightarrow{\alpha_1} V_1 \xrightarrow{\beta_1} W_1),
	\qquad
	Y=(U_2 \xrightarrow{\alpha_2} V_2 \xrightarrow{\beta_2} W_2)
	$$
	be objects of $\mathcal{D}$. Then
	$$
	X\oplus Y=
	\bigl(
	U_1\oplus U_2 \xrightarrow{\alpha_1\oplus \alpha_2}
	V_1\oplus V_2 \xrightarrow{\beta_1\oplus \beta_2}
	W_1\oplus W_2
	\bigr).
	$$
	Moreover,
	$$
	\im(\alpha_1\oplus \alpha_2)+\ker(\beta_1\oplus \beta_2)
	=
	(\im \alpha_1\oplus \im \alpha_2)+(\ker \beta_1\oplus \ker \beta_2),
	$$
	and therefore
	$$
	\im(\alpha_1\oplus \alpha_2)+\ker(\beta_1\oplus \beta_2)
	=
	(\im \alpha_1+\ker \beta_1)\oplus (\im \alpha_2+\ker \beta_2).
	$$
	Since $X,Y\in\mathcal D$, we have
	$$
	\im \alpha_1+\ker \beta_1=V_1,
	\qquad
	\im \alpha_2+\ker \beta_2=V_2.
	$$
	Hence
	$$
	\im(\alpha_1\oplus \alpha_2)+\ker(\beta_1\oplus \beta_2)=V_1\oplus V_2,
	$$
	so $X\oplus Y\in\mathcal D$. Thus $\mathcal D$ is closed under direct sums.
	
	Consequently, pushouts of morphisms in $\mathcal D$ may be computed in $\mathcal C$. Since monomorphisms are preserved and reflected, it follows that the pushout of any monomorphism is again a monomorphism. So $\mathcal{D}$ is not left integral but right integral.
	
	We now show that $\mathcal{D}$ is right quasi-abelian.
	
	Let $i\colon U\to X$ be a kernel in $\mathcal{D}$, and consider its pushout along an arbitrary morphism $g\colon U\to Z:$
	$$
	\begin{tikzcd}
		U \arrow[r, "i"] \arrow[d, "g"] \arrow[r, "\mathrm{PO}", phantom, bend right=60] & X \arrow[d] \\
		Z \arrow[r, "j"] & Y.
	\end{tikzcd}
	$$
	Since $\mathcal{C}$ is abelian, it follows that $j$ is a kernel in $\mathcal{C}$. Hence $\im_{\mathcal{C}} j = j,$ and
	$$
	\Coker_{\mathcal{C}} j=\Coker_{\mathcal{D}} j,
	$$
	because $\mathcal{D}$ is closed under quotients.
	
	Consider the canonical decomposition of $j$ in $\mathcal{C}$:
	$$
	\begin{tikzcd}
		Z \arrow[r, "j"] \arrow[rd, "\cong"'] & Y \arrow[r] & \Coker_{\mathcal{C}}j \\
		& \Im_{\mathcal{C}}j \arrow[u] &
	\end{tikzcd}
	$$
	Since both $\Im_{\mathcal{C}} j$ and $Y$ belong to $\mathcal{D}$, and $\mathcal{D}$ is a full subcategory of $\mathcal{C}$, we conclude that
	$$
	\im_{\mathcal{D}} j=\im_{\mathcal{C}} j=j.
	$$
	Therefore, $j$ is a kernel in $\mathcal{D}$. This proves that $\mathcal{D}$ is right quasi-abelian.
	
	Using this example, we obtain an example of a category that is right integral, but neither right quasi-abelian nor left semi-abelian. Indeed, it suffices to consider the product category $\mathcal{D}\times \mathbf{BOR}$ since, for product categories, such properties are inherited componentwise \cite[Theorem 3.8]{HSW2021}.

	\section*{Acknowledgements}
	The work of Ya.~Kopylov was carried out in the framework of the State Task to the
	Sobolev Institute of Mathematics (Project FWNF--2026--0026). 
	
	We are very grateful to
	Professor Sven-Ake Wegner for carefully reading the manuscript and
	pointing out several remarks.
	
	%The authors would like to thank the referee for his careful work.

\end{document}